\theoremstyle{plain}
\newtheorem{thr}{Theorem}[section]
\newtheorem{lem}[thr]{Lemma}
\newtheorem{prop}[thr]{Proposition}
\newtheorem{conj}[thr]{Conjecture}
\theoremstyle{definition}
\def\A{\mathcal{A}}
\def\S{\mathcal{S}}
\def\F{\mathcal{F}}
\def\C{\mathcal{C}}
\DeclareMathOperator{\VC}{VC-dim}
\DeclareMathOperator{\sh}{sh}
\title{VC dimension and a union theorem for set systems}
\author{Stijn Cambie\footnote{Department of Mathematics, Radboud University Nijmegen, Postbus 9010, 6500 GL Nijmegen, The Netherlands. Email: \href{mailto:S.Cambie@math.ru.nl}{S.Cambie@math.ru.nl}. This author is supported by a Vidi Grant of the Netherlands Organization for Scientific Research (NWO), grant number $639.032.614$.} \and Ant\'onio Gir\~ao\footnote{Department of Pure Mathematics and Mathematical Statistics, University of Cambridge, Wilberforce Road, CB3 0WB Cambridge, UK. Email: \href{mailto:A.Girao@dpmms.cam.ac.uk}{ A.Girao@dpmms.cam.ac.uk}}
\and Ross J. Kang \footnote{Department of Mathematics, Radboud University Nijmegen, Postbus 9010, 6500 GL Nijmegen, The Netherlands. Email: \href{mailto:ross.kang@gmail.com}{ross.kang@gmail.com}.  This author is supported by a Vidi Grant of the Netherlands Organization for Scientific Research (NWO), grant number $639.032.614$.}
}%
\date{}
\begin{document}

	\maketitle
	
	\begin{abstract}
Fix positive integers $k$ and $d$. We show that, as $n\to\infty$, any set system $\A \subset 2^{[n]}$ for which the VC dimension of $\{ \triangle_{i=1}^k S_i \mid S_i \in \A\}$ is at most $d$ has size at most $(2^{d\bmod{k}}+o(1))\binom{n}{\lfloor d/k\rfloor}$. 
Here $\triangle$ denotes the symmetric difference operator.
This is a $k$-fold generalisation of a result of Dvir and Moran, and it settles one of their questions.

A key insight is that, by a compression method, the problem is equivalent to an extremal set theoretic problem on $k$-wise intersection or union that was originally due to Erd\H{o}s and Frankl.

We also give an example of a family $\A \subset 2^{[n]}$ such that the VC dimension of $\A\cap \A$ and of $\A\cup \A$ are both at most $d$, while $\lvert \A \rvert = \Omega(n^d)$. This provides a negative answer to another question of Dvir and Moran.

\smallskip
{\bf Keywords}: VC dimension, extremal set theory, compression.
{\bf MSC}: 05D05  
\end{abstract}
	
\section{Introduction}

Let $\A \subset 2^X$ be a family of subsets of some set $X$.
As usual, we say that $Y \subset X$ is {\em shattered} by $\A$ if the family $\A\cap Y = \{S \cap Y \mid S \in \A\}$ is $2^Y$.
Moreover, we denote by $\sh(\A)$ the set of all subsets of $X$ which are shattered by $\A$. Recall that the {\em VC dimension} of $\A$, denoted by $\VC(\A)$, is the cardinality of the largest $Y \subset X$ in $\sh(\A)$.
We shall assume throughout that $X= [n] := \{1,\dots,n\}$.
Let $\binom{[n]}{\le t }$ denote the family $\{S \subset [n] \mid \lvert S \rvert \le t\}$ and $\binom{n}{\le t }$ its size.

A foundational result regarding the VC dimension of set systems is the Sauer--Shelah--Perles or Sauer--Shelah Lemma. A marginally weaker version of this result was established earlier by Vapnik and \v{C}ervonenkis~\cite{VaCe71}.

\begin{thr}[Sauer~\cite{Sau72}, Perles and Shelah~\cite{She72}]\label{SHPlemma}
Let $d \le n$ be positive integers. For every $\A \subset 2^{[n]}$ with $\VC(\A) \le d$, we have $\lvert \A \rvert \le \binom{n}{ \le d}$.
\end{thr}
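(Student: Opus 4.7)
The plan is to prove Theorem~\ref{SHPlemma} by a shifting argument that reduces the problem to the case of downward-closed families, where the bound is immediate. For a fixed $i \in [n]$ and a family $\A \subset 2^{[n]}$, define $\sigma_i(A) = A \setminus \{i\}$ whenever $i \in A$ and $A \setminus \{i\} \notin \A$, and $\sigma_i(A) = A$ otherwise; set $\sigma_i(\A) = \{\sigma_i(A) : A \in \A\}$. Iteratively applying $\sigma_1, \sigma_2, \dots$ (cycling through coordinates if necessary) until the family is stable under each $\sigma_i$ terminates in finitely many steps, since each nontrivial shift strictly decreases $\sum_{A \in \A} \lvert A \rvert$; call the resulting family $\A^*$.

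Three properties suffice: (i) $\lvert \sigma_i(\A)\rvert = \lvert \A \rvert$, by a direct injectivity check on $\sigma_i$; (ii) $\sh(\sigma_i(\A)) \subseteq \sh(\A)$, so in particular $\VC(\sigma_i(\A)) \le \VC(\A)$; and (iii) $\A^*$ is downward-closed, since stability forces $A \setminus \{i\} \in \A^*$ whenever $i \in A \in \A^*$. In a downward-closed family, every member set is shattered by the family (all its subsets belong), so $\VC(\A^*) = \max\{\lvert A \rvert : A \in \A^*\}$; combined with (i) and (ii), this yields $\lvert \A \rvert = \lvert \A^* \rvert \le \binom{n}{\le d}$.

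The main obstacle is property (ii), which I would prove by showing every $Y$ shattered by $\sigma_i(\A)$ is also shattered by $\A$. If $i \notin Y$ this is immediate, because $\sigma_i(A) \cap Y = A \cap Y$ for every $A \in \A$. If $i \in Y$, I split into $S \subseteq Y$ with $i \in S$ versus $i \notin S$. When $i \in S$, the witness $B \in \sigma_i(\A)$ with $B \cap Y = S$ must contain $i$; since $\sigma_i$ never adds $i$, such a $B$ is a fixed point of $\sigma_i$, hence $B \in \A$. When $i \notin S$, apply the shattering to $S \cup \{i\}$ to obtain a witness $B' \in \sigma_i(\A)$ with $i \in B'$, whence $B' \in \A$ as before; by the definition of $\sigma_i$, the fact that $B'$ is a fixed point with $i \in B'$ forces $B' \setminus \{i\} \in \A$, and this set has intersection $S$ with $Y$. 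A short case analysis of this kind concludes the proof.
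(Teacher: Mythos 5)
Your proof is correct: the injectivity of $\sigma_i$, the inclusion $\sh(\sigma_i(\A))\subseteq\sh(\A)$ via the two-case analysis on whether $i\in S$, the termination via $\sum_{A\in\A}\lvert A\rvert$, and the observation that a downward-closed family has VC dimension equal to its maximum set size all check out. The paper itself states Theorem~\ref{SHPlemma} as a cited result without proof, but your $\sigma_i$ is exactly the paper's $i$-compression $\C_i$, and your argument is precisely the $k=1$ specialisation of the machinery the paper develops in Lemma~\ref{VCcompressed} and Theorem~\ref{sharp}, so you are using essentially the same approach the authors take for their generalisation.
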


\noindent
It is easy to see that this bound is sharp by taking, for example, $\A=\binom{[n]}{\le d}$. 
This bound has fundamental importance and wide applicability, e.g.~in machine learning, model theory, graph theory, and computational geometry.

Let $\star$ be a binary set-operation in $\{ \cap, \cup, \triangle \}$, where $\triangle$ denotes the symmetric difference operator.
We also write $\star \A^k 
= \{S_1 \star \dots \star S_k \mid S_i \in \A, \forall i \in [k]\}$.
Motivated by an application in PAC learnability, Dvir and Moran~\cite{DvMo18+} recently investigated how large $\A$ can be assuming $\A \star \A= \{S\star T \mid S, T \in \A\}$ has bounded VC dimension. 
Using the polynomial method, they proved that $\lvert \A \rvert \le 2\binom{n}{\le\lfloor d/2\rfloor}$ provided $\VC(\A\triangle \A) \le d$. 
They also asked whether an analogous result might hold assuming $\VC(\triangle\A^k) \le d$, particularly for $k=3$~\cite[Qu.~2]{DvMo18+}.

It turns out that this last problem is equivalent to an extremal set theoretic problem about $k$-wise $(n-d)$-union families, as we detail in Section~\ref{sec:equiv}. The provenance of the latter problem is long, predating the notion of VC dimension itself.
For example, through this equivalence, we can observe the following as a consequence of a result of Katona from 1964~\cite{K64}.

\begin{thr}\label{sharp2}
	Let $d<n$ be positive integers with $d\equiv r \pmod 2$ for some $r \in\{0,1\}$.
	For every $\A \subset 2^{[n]}$ with $\VC(\A \triangle \A) \le d$, we have
	$\lvert \A \rvert \le 2^r \binom{n-r}{\le \lfloor d/2 \rfloor}$.
\end{thr}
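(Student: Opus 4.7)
My strategy is to reduce Theorem~\ref{sharp2} to Katona's classical diameter theorem via a compression argument tailored to the symmetric difference.

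First, observe that the target bound $2^r\binom{n-r}{\le \lfloor d/2\rfloor}$ coincides with the extremal size, proved by Katona in 1964, of a family $\F\subseteq 2^{[n]}$ of \emph{diameter} at most $d$, meaning $|A\triangle B|\le d$ for all $A,B\in\F$. Since any such diameter-$d$ family trivially satisfies $\F\triangle\F\subseteq\binom{[n]}{\le d}$, and hence $\VC(\F\triangle\F)\le d$, sharpness of Theorem~\ref{sharp2} follows immediately. The task is therefore to show that the VC-dimension hypothesis is no weaker, extremally, than the diameter hypothesis.

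For each pair $i<j$ in $[n]$ I would introduce the standard left-shift $s_{ij}$ on $\A$: for each $S \in \A$ with $j\in S$ and $i\notin S$, replace $S$ by $(S\setminus\{j\})\cup\{i\}$ whenever the shifted set is not already in $\A$. This operation preserves $|\A|$, and a short case analysis shows that it does not increase $\VC(\A\triangle \A)$: if $Y$ is shattered by $s_{ij}(\A)\triangle s_{ij}(\A)$, then a suitable translate of $Y$ is shattered by $\A\triangle\A$. Iterating over all pairs produces a shift-stable family $\A^*$ of the same size with $\VC(\A^*\triangle\A^*)\le d$.

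The key technical step is then to argue that a shift-stable $\A^*$ with $\VC(\A^*\triangle\A^*)\le d$ has diameter at most $d$. Suppose, for contradiction, that $|A\triangle B|\ge d+1$ for some $A,B\in\A^*$, and fix a $(d+1)$-subset $Y\subseteq A\triangle B$. Using shift-stability to perform local modifications of $A$ and $B$ that alter their symmetric difference exactly within $Y$, one produces, for every $Z\subseteq Y$, a pair $A',B'\in\A^*$ with $(A'\triangle B')\cap Y = Z$. This exhibits $Y$ as shattered by $\A^*\triangle\A^*$, contradicting the VC hypothesis. Once the diameter bound on $\A^*$ is established, Katona's theorem directly yields $|\A^*|\le 2^r \binom{n-r}{\le \lfloor d/2\rfloor}$, and $|\A|=|\A^*|$ completes the proof.

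The main obstacle is the passage from the VC-dimension hypothesis on $\A\triangle\A$ to a concrete diameter bound on $\A^*$. The excerpt's remark that under compression the problem is equivalent to an extremal problem on $k$-wise $(n-d)$-union families (tracing back to Erd\H{o}s and Frankl) suggests that a cleaner route goes through complementation and an intersection condition, but the core technical difficulty resides in exactly this reduction.
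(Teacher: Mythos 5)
There is a genuine gap at the key technical step. Your claim that a shift-stable family $\A^*$ with $\VC(\A^*\triangle\A^*)\le d$ must have diameter at most $d$ is false: the family $\A^*=\{\emptyset,[n]\}$ is invariant under every $(i,j)$-shift (no set in it has $j$ without $i$), and $\A^*\triangle\A^*=\{\emptyset,[n]\}$ has VC dimension $1\le d$, yet the diameter of $\A^*$ is $n>d$. This also shows exactly why your proposed shattering argument cannot work: the only traces available on a $(d+1)$-set $Y\subseteq A\triangle B$ are $\emptyset$ and $Y$ itself, so no proper nonempty $Z\subseteq Y$ is realised. Exchange shifts $\S_{ij}$ are the wrong normalisation here because they never shrink sets; the obstruction is large sets with large symmetric difference but trivial trace structure. (Your first claim, that $(i,j)$-shifts do not increase $\VC(\A\triangle\A)$, is also asserted without proof and is not the standard trace lemma, which concerns element deletion rather than exchange; but the argument already fails at the diameter step regardless.)

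The repair is to use the \emph{compression} $\C_i$ (delete element $i$ from $S$ unless $S\setminus\{i\}$ is already present) rather than the exchange shift. One can show that $\sh(\C_i(\A)\triangle\C_i(\A))\subseteq\sh(\A\triangle\A)$, so compression preserves $|\A|$ and the hypothesis $\VC(\A\triangle\A)\le d$; after all compressions $\A$ is closed under taking subsets. For a downward-closed family the condition $\VC(\A\triangle\A)\le d$ is equivalent not to the diameter condition but to the stronger \emph{union} condition $|A\cup B|\le d$ for all $A,B\in\A$ (since $(A\setminus B)\triangle B=A\cup B$ lies in $\A\triangle\A$, which is itself downward closed, and a downward-closed family shatters every set it contains). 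The extremal bound for pairwise unions of size at most $d$ is Katona's 1964 intersection theorem read through complementation; the diameter version you invoke is Kleitman's 1966 theorem, which has the same extremal value but a strictly weaker hypothesis, and it is the union problem, not the diameter problem, that the VC hypothesis reduces to. Your sharpness discussion is fine.
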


\noindent
This is a best possible form of the result of Dvir and Moran~\cite{DvMo18+}.

In fact, the question of Dvir and Moran is closely related to a long-standing conjecture of Erd\H{o}s and Frankl~\cite{F79}.
The question is answered by a bound on $k$-wise $(n-d)$-union families that is tight if the ground set $[n]$ is large enough. That bound is an asymptotic form of Erd\H{o}s and Frankl's conjecture and it yields the following theorem.
We provide a proof in Section~\ref{sec:AsProofFrankl}, but remark that it was shown by Frankl~\cite{Fra76} a few years before his conjecture with Erd\H{o}s.

\begin{thr}\label{main}
	Let $k,d$ be positive integers with $d \equiv r \pmod k$ for some $0 \le r \le k-1$.
	There exists $n_0=n_0(d,k)$ such that, for every $n \ge n_0$
 and every $\A \subset 2^{[n]}$ with $\VC(\triangle \A^k) \le d$, we have
$\lvert \A \rvert \le 2^r\binom{n-r}{ \le \lfloor d/k \rfloor }$.
\end{thr}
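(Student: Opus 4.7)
The plan is to transform the VC hypothesis on $\triangle\A^k$ into a concrete extremal condition on $\A$ via compression, and then to apply the asymptotic $k$-wise union bound developed in Section~\ref{sec:AsProofFrankl}. The first step is to compress $\A$ into a down-closed family of the same cardinality. For each coordinate $i \in [n]$, let $\sigma_i$ denote the standard downward shift: for every $S \in \A$, replace $S$ by $S \setminus \{i\}$ whenever $i \in S$ and $S \setminus \{i\} \notin \A$; otherwise leave $S$ unchanged. This operation preserves $\lvert \A \rvert$, and the key claim is that it is monotone for our hypothesis, namely $\VC(\triangle(\sigma_i\A)^k) \le \VC(\triangle\A^k)$. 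Iterating the $\sigma_i$ over all $i$, finitely many rounds then produce a fixed point $\A'$ that is down-closed and satisfies $\lvert \A' \rvert = \lvert \A \rvert$ and $\VC(\triangle(\A')^k) \le d$.

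Once $\A'$ is down-closed, so is $\triangle(\A')^k$: if $R = S_1 \triangle\cdots\triangle S_k \in \triangle(\A')^k$ and $x \in R$, then replacing any one $S_i$ containing $x$ by $S_i \setminus \{x\} \in \A'$ produces $R \setminus \{x\} \in \triangle(\A')^k$. Hence $\VC(\triangle(\A')^k)$ equals the maximum cardinality of an element of $\triangle(\A')^k$. Moreover, for any $S_1, \dots, S_k \in \A'$ one can partition $U = S_1 \cup \cdots \cup S_k$ into $T_1 \sqcup \cdots \sqcup T_k$ with $T_i \subseteq S_i$; down-closedness puts each $T_i \in \A'$ and disjointness gives $T_1 \triangle\cdots\triangle T_k = U \in \triangle(\A')^k$. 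Consequently, $\VC(\triangle(\A')^k) = \max\{\lvert S_1 \cup \cdots \cup S_k \rvert : S_i \in \A'\}$, and the hypothesis translates into the assertion that $\A'$ is a \emph{$k$-wise $d$-union family}: $\lvert S_1 \cup \cdots \cup S_k \rvert \le d$ for all $S_1, \dots, S_k \in \A'$.

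Writing $d = k\ell + r$ with $0 \le r \le k-1$, the asymptotic bound proved in Section~\ref{sec:AsProofFrankl} (due to Frankl~\cite{Fra76}) gives that any $k$-wise $d$-union family in $2^{[n]}$ with $n \ge n_0(d,k)$ has size at most $2^r\binom{n-r}{\le \ell}$; together with $\lvert \A \rvert = \lvert \A' \rvert$, this finishes the proof. The extremal configuration is the down-closed family of all $S \subseteq [n]$ with $\lvert S \setminus [r] \rvert \le \ell$, which has size exactly $2^r\binom{n-r}{\le \ell}$ and is trivially $k$-wise $d$-union.

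The main obstacle is the monotonicity claim $\VC(\triangle(\sigma_i\A)^k) \le \VC(\triangle\A^k)$. For $k = 1$ this reduces to the classical fact $\sh(\sigma_i \A) \subseteq \sh(\A)$ underlying the combinatorial proof of Sauer--Shelah, but for $k \ge 2$ the shift interacts with symmetric difference in a subtle way: given a set $Y$ shattered by $\triangle(\sigma_i\A)^k$, one must lift each of the $2^{\lvert Y \rvert}$ shattering witnesses from $\sigma_i\A$ back to a $k$-tuple in $\A$, and this requires a careful case split on whether $i \in Y$ and on how many of the sets in the $k$-tuple were actually shifted. The awkward subcases are those in which the shift would introduce or cancel the element $i$ in the symmetric difference, and the way to handle them is to use the defining property that any $S \in \sigma_i\A$ with $i \in S$ has both $S$ and $S \setminus \{i\}$ already present in $\A$, so that one can freely toggle $i$ in at most one coordinate of the $k$-tuple to absorb any parity mismatch.
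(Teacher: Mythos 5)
Your proposal is correct and follows essentially the same route as the paper: compress $\A$ coordinate-by-coordinate using the monotonicity of $\VC(\triangle\,\cdot\,^k)$ under compression (the paper's Lemma~\ref{VCcompressed}, proved there by exactly the toggling-$i$-in-one-coordinate argument you sketch), observe that for a down-closed family the hypothesis becomes the $k$-wise $(n-d)$-union condition (Theorem~\ref{sharp}), and then invoke the asymptotic Erd\H{o}s--Frankl bound of Section~\ref{sec:AsProofFrankl} (Theorem~\ref{partialproofFranklConj}). No substantive differences to report.
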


\noindent
This bound is sharp and 
it completely settles the aforementioned question of Dvir and Moran for every $k$.
Theorem~\ref{main} may be seen as an asymptotic generalisation of Theorems~\ref{SHPlemma} ($k=1$) and~\ref{sharp2} ($k=2$).
Unlike in those two cases, however, the bound in general fails without assuming large enough $n$.

Dvir and Moran noted that the two simple examples $\binom{[n]}{\le d}$ and $\binom{[n]}{\ge n-d}$ preclude analogues of Theorem~\ref{sharp2} if $\star \in \{ \cap, \cup \}$. However, since $S\triangle T = (S\cup T)\setminus (S\cap T)$ for any sets $S,T$, one might wonder if bounding the VC dimension of {\em both} $\A\cap \A$ and $\A\cup \A$ could still lead to a significantly better bound on $\lvert \A \rvert$.
In Section~\ref{Ques1}, we show that this is not the case. Indeed, we construct a family $\A \subset 2^{[n]}$ satisfying $\lvert \A \rvert= \Omega(n^d)$, $\VC(\A \cup \A)\le d$ and $\VC(\A \cap \A) \le d$. This answers another question of Dvir and Moran~\cite[Qu.~1]{DvMo18+} in the negative.

\section{An extremal set theoretic equivalence}\label{sec:equiv}

In this section, we prove that the question of Dvir and Moran~\cite[Qu.~2]{DvMo18+} is equivalent to two older problems in extremal set theory.

For brevity, we define the following parameters, given integers $k,t,d,n>0$ with $t,d< n$:
\begin{itemize}
\item
$m(n,k,t)$ is the size of a largest $\F \subset 2^{[n]}$ that is {\em $k$-wise $t$-intersecting}, i.e.~every member of $\cap \F^k$ has cardinality at least $t$;
\item
$p(n,k,d)$ is the size of a largest $\F \subset 2^{[n]}$ that is {\em $k$-wise $(n-d)$-union}, i.e.~every member of $\cup \F^k$ has cardinality at most $d$; and
\item
$p'(n,k,d)$ is the size of a largest $\F \subset 2^{[n]}$ such that $\VC(\triangle \F^k) \le d$, i.e.~every member of $\sh(\triangle \F^k)$ has cardinality at most $d$.
\end{itemize}
We have chosen our parameter notation to emphasise our problem setting.

Note that easily $p(n,k,d)=m(n,k,n-d)$ always holds.
Due to a connection with the Erd\H{o}s--Ko--Rado Theorem, most previous work on bounding  $m(n,k,t)$ and $p(n,k,d)$ has taken $t=n-d$ fixed.
In contrast, we focus in this paper on $d$ fixed. Put another way, we consider $k$-wise intersecting families with predominant intersections.
For an extensive overview of previous work in the area, we recommend a recent survey by Frankl and Tokushige~\cite{FT16}.

We will use the notion of compression as defined in e.g.~\cite{BR95}. 
For any $i \in [n]$, the {\em $i$-compression} of a family $\A$ is 
 $\C_i(\A)=\{\C_i(A)\mid A \in \A\}$, where
\[
\C_i(S) =
\begin{cases}
S & \text{if $S\in \A$ and $S\setminus \{i\} \in \A$} \\
S \setminus \{i\} & \text{otherwise}
\end{cases}.
\]
After $n$ compressions, we obtain a compressed family, i.e.~a family that is invariant under compressions or, equivalently, under taking subsets. 

With compression we show that $p'(n,k,d)=p(n,k,d)$. First we need the following lemma. 
Note that this lemma is also a consequence of the fact that the trace on a subset $Y$ of a family $\F$ cannot increase by compression, which is proven in~\cite{F83}.

\begin{lem}\label{VCcompressed}
	Let $\A_1, \ldots, \A_k \subset 2^{[n]}$ be families of sets.
	For any $i \in [n]$,
	\[\VC\left( \C_i(\A_1) \triangle \cdots \triangle \C_i(\A_k)\right) \le \VC(\A_1 \triangle \cdots \triangle \A_k).\]
\end{lem}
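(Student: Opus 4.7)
The plan is to show that any $Y \subset [n]$ shattered by $\F' := \C_i(\A_1)\triangle\cdots\triangle\C_i(\A_k)$ is also shattered by $\F := \A_1\triangle\cdots\triangle\A_k$. The case $i \notin Y$ is immediate: compression affects only coordinate $i$, so $\C_i(\A_j)|_Y = \A_j|_Y$ for each $j$, whence $\F'|_Y = \F|_Y$ and the conclusion transfers directly. So I would assume $i \in Y$ for the rest of the argument.

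The substantive ingredient is a lifting property of compression: every $A' \in \C_i(\A_j)$ equals $\C_i(S)$ for some $S \in \A_j$ differing from $A'$ at most at coordinate $i$, so $A'$ admits a lift $A \in \A_j$ with $A \in \{A', A' \triangle \{i\}\}$. The crucial refinement is that \emph{if $i \in A'$}, then the compression rule forces both $A'$ and $A' \setminus \{i\}$ to lie in $\A_j$, so both lifts are simultaneously available at such a coordinate.

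To realise a given $Z \subset Y$, I would pick $Z^* \in \{Z, Z \triangle \{i\}\}$ with $i \in Z^*$ (possible since $i \in Y$), then use the shattering of $Y$ by $\F'$ to choose $A'_j \in \C_i(\A_j)$ with $\triangle_j A'_j \cap Y = Z^*$. Since $i \in Z^*$, some coordinate $j^*$ satisfies $i \in A'_{j^*}$, where by the refinement I have a free choice between two lifts. For every $j \ne j^*$ I take any available lift $A_j$. The symmetric difference $\triangle_j A_j$ then agrees with $\triangle_j A'_j = Z^*$ on $Y \setminus \{i\}$, while its $i$-content differs from that of $Z^*$ by the parity of the number of coordinates where the lift flipped the $i$-entry. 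Toggling the free lift at $j^*$ flips this parity, so I can realise either $Z^*$ or $Z^* \triangle \{i\}$; one of these equals $Z$.

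The main obstacle is a rigidity issue: a direct lift of a realisation of $Z$ itself might have all coordinates forced and yield the wrong $i$-parity, with no way to correct it. Detouring through $Z^*$, whose containment of $i$ automatically guarantees at least one coordinate with both lifts available, is the key trick that sidesteps this.
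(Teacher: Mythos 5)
Your proposal is correct and follows essentially the same route as the paper: both arguments reduce to showing $\sh(\C_i(\A_1)\triangle\cdots\triangle\C_i(\A_k))\subset\sh(\A_1\triangle\cdots\triangle\A_k)$, dispatch the case $i\notin Y$ trivially, and for $i\in Y$ target the subset containing $i$ so that some coordinate $j^*$ has $i\in\C_i(S_{j^*})$, whence both $S_{j^*}$ and $S_{j^*}\setminus\{i\}$ lie in $\A_{j^*}$ and toggling between them realises both $Z^*$ and $Z^*\setminus\{i\}$. This matches the paper's proof in all essentials.
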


\begin{proof}
	We prove the stronger statement that \[\sh\left( \C_i(\A_1) \triangle \cdots \triangle \C_i(\A_k)\right) \subset \sh(\A_1 \triangle \cdots \triangle \A_k).\]
	Note that the example $ \A_1=\{\emptyset,[n]\}$ shows that the reverse inclusion is not true in general.
	Let $Y \subset [n]$ be any subset shattered by $\C_i(\A_1) \triangle \cdots \triangle \C_i(\A_k)$.
	If $i \not \in Y,$ then clearly $Y$ is shattered by $\A_1 \triangle \cdots \triangle \A_k$. So assume $i \in Y.$
	Let $R = R'\cup \{i\}$ for some $R'\subset Y$.
	Then $Y \cap (\C_i(S_1) \triangle \cdots \triangle \C_i(S_k))=R$ for some $S_1\in \A_1,\dots,S_k\in \A_k$.
	There is at least one $j\in[k]$ for which $i \in \C_i(S_j)=S_j$ and so both $S_j$ and $S_j \setminus \{i\}$ belong to $\A_j$.
	Note that this implies 
	$\{ Y \cap (S_1 \triangle \cdots  \triangle S_k), Y \cap (S_1 \triangle \cdots \triangle (S_j \setminus \{i\}) \triangle \cdots \triangle S_k)\} = \{R, R \setminus \{i\} \}$.
This proves $Y\in \sh(\A_1 \triangle \cdots \triangle \A_k)$.
 \end{proof}

With this lemma we are ready to prove the equivalence. 

\begin{thr}\label{sharp}
	For every $\A \subset 2^{[n]}$ with $\VC(\triangle \A^k) \le d$, we have
	$\lvert \A \rvert \le p(n,k,d)$.
	Moreover, there are families $\A \subset 2^{[n]}$ with $\VC(\triangle \A^k) \le d$ that meet the bound. That is, $p'(n,k,d)=p(n,k,d)$.
\end{thr}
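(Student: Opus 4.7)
The plan is to establish the two inequalities $p'(n,k,d) \ge p(n,k,d)$ and $p'(n,k,d) \le p(n,k,d)$ separately, with the first coming almost for free and the second relying on the compression lemma above.

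For the easy direction $p'(n,k,d) \ge p(n,k,d)$, I would take any $k$-wise $(n-d)$-union family $\F \subset 2^{[n]}$ attaining $\lvert \F \rvert = p(n,k,d)$ and verify $\VC(\triangle \F^k) \le d$. If $Y \in \sh(\triangle \F^k)$, then in particular $Y = Y \cap (\triangle_{i=1}^k S_i)$ for some $S_1,\dots,S_k \in \F$, forcing $Y \subseteq \triangle_{i=1}^k S_i \subseteq \bigcup_{i=1}^k S_i$. By the $k$-wise $(n-d)$-union hypothesis the right-hand side has cardinality at most $d$, hence $\lvert Y\rvert \le d$.

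For the reverse inequality, I would invoke compression. Given $\A \subset 2^{[n]}$ with $\VC(\triangle \A^k) \le d$, each $i$-compression $\C_i$ preserves $\lvert \A \rvert$ (the map $S \mapsto \C_i(S)$ is injective on $\A$) and, by Lemma~\ref{VCcompressed} applied with $\A_1 = \cdots = \A_k = \A$, preserves the VC condition. Iterating over all $i \in [n]$ produces a family $\A^*$ with $\lvert \A^*\rvert = \lvert \A\rvert$, $\VC(\triangle (\A^*)^k) \le d$, and $\A^*$ downward-closed.

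The key step is then to deduce that $\A^*$ is $k$-wise $(n-d)$-union. Pick any $S_1,\dots,S_k \in \A^*$, set $U = \bigcup_{i=1}^k S_i$, and partition $U = U_1 \sqcup \cdots \sqcup U_k$ with $U_i = S_i \setminus (S_1 \cup \cdots \cup S_{i-1})$. For an arbitrary $R \subseteq U$, let $T_i = R \cap U_i$; then $T_i \subseteq S_i$, so $T_i \in \A^*$ by downward-closure, and the $T_i$ are pairwise disjoint, hence $\triangle_{i=1}^k T_i = \bigcup_{i=1}^k T_i = R$ and in particular $U \cap (\triangle_{i=1}^k T_i) = R$. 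This exhibits $U$ as shattered, so $\lvert U\rvert \le d$. Thus every $k$-wise union in $\A^*$ has size at most $d$, giving $\lvert \A \rvert = \lvert \A^* \rvert \le p(n,k,d)$ and finishing the proof.

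The heavy lifting is packaged inside Lemma~\ref{VCcompressed}; the only genuinely new idea at this stage is the disjoint-union trick in the third paragraph, which converts the VC hypothesis on a downward-closed family into a pointwise cardinality bound on $k$-wise unions. I expect this to be the only non-routine obstacle, since once downward-closure is available the shattering witnesses can be built explicitly inside $\A^*$.
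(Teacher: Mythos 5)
Your proposal is correct and follows essentially the same route as the paper: compress via Lemma~\ref{VCcompressed} to get a downward-closed family, then use the disjoint decomposition $U_i = S_i \setminus (S_1 \cup \cdots \cup S_{i-1})$ to show every $k$-wise union is shattered, with the converse direction following from $\triangle\F^k \subset \binom{[n]}{\le d}$. Your write-up just makes explicit the details the paper compresses into the remark that $\triangle\A^k$ inherits downward-closure, so that $\VC(\triangle\A^k)$ equals the largest $k$-wise union size.
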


\begin{proof}
Let $\A \subset 2^{[n]}$ satisfy $\VC(\triangle \A^k) \le d$.
	By Lemma~\ref{VCcompressed}, we may assume that $\A$ is a compressed family and so if $S \in \A$ then $2^S \subset \A.$
	Note that this property also holds for $\triangle \A^k$ and thus $\VC(\triangle \A^k)$ equals the size of a largest union of $k$ elements in $\A$.
	Then $\VC(\triangle \A^k) \le d$ implies that $\A$ 
	 is a $k$-wise $(n-d)$-union family, and so $\lvert \A \rvert \le p(n,k,d)$.
	 
	Taking $\A$ to be any maximum $k$-wise $(n-d)$-union family, we have that $\VC(\triangle \A^k) \le d$ since $\triangle \A^k \subset \binom{[n]}{\le d}$. This implies the bound is sharp.
\end{proof}

	By Theorem~\ref{sharp}, Theorem~\ref{sharp2} follows from an exact bound on $p(n,2,d)$ for every $n$ and $d$ due to Katona~\cite{K64}. 
It is interesting to note that Katona's result can also be shown using compression, as shown by Kleitman~\cite{K66}. 

\section{An asymptotic form of a conjecture of Erd\H{o}s and Frankl}\label{sec:AsProofFrankl} 

In this section, we prove the exact value of $p(n,k,d)$ for all $n$ large enough with respect to $d$ and $k$. 
This is an asymptotic form of a conjecture of Erd\H{o}s and Frankl from the 1970's, cf.~\cite{F79,F91}.
We have reformulated the conjecture to suit our purposes, i.e.~to address~\cite[Qu.~2]{DvMo18+}.

\begin{conj}[Erd\H{o}s and Frankl, cf.~\cite{F79,F91}]\label{F}
	For all integers $n,k,d>0$ with $n \ge d$,
	\[
	p(n,k,d)= \max_{ 0 \le i \le d/k} 2^{d-ki} \binom{n-d+ki}{ \le i}
	\]
\end{conj}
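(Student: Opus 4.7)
The lower bound is immediate by construction: for each integer $i$ with $0 \le i \le d/k$, fix any $F \subseteq [n]$ with $\lvert F \rvert = d-ki$ and take
$$\A_i = \{ S \cup B \mid S \subseteq F,\ B \subseteq [n] \setminus F,\ \lvert B \rvert \le i\}.$$
Any $k$-fold union from $\A_i$ has the form $S' \cup B'$ with $S' \subseteq F$ and $\lvert B' \rvert \le ki$, hence size at most $(d-ki)+ki = d$, so $\A_i$ is $k$-wise $(n-d)$-union; and $\lvert \A_i \rvert = 2^{d-ki}\binom{n-d+ki}{\le i}$. The content of the conjecture is therefore the matching upper bound $p(n,k,d) \le \max_i 2^{d-ki}\binom{n-d+ki}{\le i}$.

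For that upper bound I would begin, as in Section~\ref{sec:equiv}, by compressing: by (the proof of) Theorem~\ref{sharp} we may assume $\A$ is a downset, in which case the $k$-wise $(n-d)$-union condition is determined entirely by the maximal elements of $\A$, and $\lvert \A \rvert$ equals the number of subsets of the ideal they generate. I would then attempt induction on $d$ via the familiar split on a ground element: fix some $x \in [n]$, decompose $\A = \A_0 \sqcup \A_1$ into members avoiding and containing $x$ respectively, set $\A_1' = \{S \setminus \{x\} : S \in \A_1\} \subseteq \A_0$, and observe that $\A_0 \subseteq 2^{[n]\setminus\{x\}}$ is $k$-wise $((n-1)-d)$-union while $\A_1'$ is $k$-wise $((n-1)-(d-1))$-union, giving
$$\lvert \A \rvert = \lvert \A_0 \rvert + \lvert \A_1' \rvert \le p(n-1,k,d) + p(n-1,k,d-1).$$

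The principal obstacle is that this additive recursion is strictly lossy against the conjectured value. Already at $k=2,d=2,n=4$ one has $p(4,2,2)=5$ (attained at $i=1$) while $p(3,2,2)+p(3,2,1)=4+2=6$, because the two subproblems are extremal at different indices $i$ in the outer maximum. To close the gap one would need (a)~to choose $x$ adversarially — for instance via degree-type statistics of the maximal sets in $\A$ — so that the dominant index $i$ on the right-hand side matches the one active on the left; and (b)~more fundamentally, to upgrade the inductive hypothesis to a \emph{stability} statement asserting that any $\A$ near-extremal for $p(n-1,k,d')$ must, up to a relabelling of $[n-1]$, be a bounded perturbation of a single $\A_i$. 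Propagating such a stability assertion through the recursion would then force the reassembled family to itself lie in some single $\A_i$, matching the conjectured maximum.

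I expect this stability-driven step to be the true heart of the matter. Both Katona's theorem (Theorem~\ref{sharp2}) for $k=2$ and Frankl's asymptotic bound (Theorem~\ref{main}) handle regimes in which the outer maximum is realised by a single fixed index $i$ throughout, and the compression plus induction template suffices there. The full conjecture, by contrast, demands an analysis of the transition between competing indices at small $n$ — e.g.\ the tie at $n = d+1$ where several $i$ yield the same bound — and I believe it is precisely this feature that has kept the conjecture open in its full form.
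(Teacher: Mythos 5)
There is a genuine gap, but it is important to frame it correctly: the statement you were asked to prove is Conjecture~\ref{F}, which is \emph{open} and which the paper itself does not prove in full generality; the paper only establishes the asymptotic form, i.e.\ the exact value $p(n,k,d)=2^r\binom{n-r}{\le \lfloor d/k\rfloor}$ for $n\ge n_0(d,k)$ (Theorem~\ref{partialproofFranklConj}, due to Frankl). Your lower bound is correct and is exactly the paper's family $\A_{r,i}$ with $r=d-ki$, and your element-splitting recursion $\lvert \A\rvert\le p(n-1,k,d)+p(n-1,k,d-1)$ is valid for downsets; your numerical check that it is lossy ($p(4,2,2)=5$ while $p(3,2,2)+p(3,2,1)=6$) is also right. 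But everything beyond that --- choosing the split element adversarially, upgrading the induction to a stability statement, and propagating it --- is a research plan, not an argument: no stability lemma is stated or proved, and no mechanism is given that actually recovers the conjectured maximum when the two subproblems are extremal at different indices $i$. So the upper bound, which is the entire content of the conjecture, is not established.

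For comparison, the paper's route to the provable (asymptotic) part is different from your sketch in two ways. First, the induction is on the residue $r=d\bmod k$ rather than on $d$ itself: the base case $r=0$ (Proposition~\ref{gen01}) is handled directly via Lemma~\ref{MainLem}, by greedily building $k$ sets whose union exceeds $d$ whenever $\lvert\A\rvert>\binom{n}{\le d/k}$. Second, in the inductive step the split on the element $1$ is not used symmetrically: $\lvert\A_1\rvert\le p(n-1,k,d-1)$ always, and if also $\lvert\A_{\overline{1}}\rvert\le p(n-1,k,d-1)$ one is done, while otherwise $\A_{\overline{1}}$ contains $k$ sets with union of size exactly $d$, and then compression \emph{plus shifting} (Lemma~\ref{ij_shift}) lets one assume this union is $[d+1]\setminus\{1\}$ and count the whole family directly by $2^{d+1}\binom{n-(d+1)}{\le t-1}+\binom{n-(d+1)}{t}$, which is beaten by $2^r\binom{n-r}{\le t}$ once $n$ is large. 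This is precisely how the paper sidesteps the lossy additive recursion you identified: for $n\ge n_0(d,k)$ the outer maximum in Conjecture~\ref{F} is attained at the single index $i=\lfloor d/k\rfloor$ at every level of the induction, so the competing-index phenomenon you describe never arises there. Your diagnosis that the transition between competing indices at small $n$ is the real obstruction to the full conjecture is reasonable, but as submitted your text proves only the easy direction.
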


\noindent
As already noted in~\cite{F79}, this conjecture is sharp if true. To see this consider the following families.
Let $\A_{r,i}= \binom{[n-r]}{\le i} \times 2^{[n]\setminus[n-r]}$, for some $0\leq i\leq \lfloor d/k \rfloor$ and $r=d-ki$. Then $\lvert \A_{r,i} \rvert=2^r \binom{n-r}{\le i}$ and the union of any $k$ sets in $\A_{r,i}$ has size at most $ki+r=d$.

Frankl himself gave most attention to the case where $n-d$ is some fixed $t$.
In \cite{F79}, he confirmed Conjecture~\ref{F} when $t = n-d\le C k 2^k$ for some positive constant $C>0$, and also showed the unique extremal examples are isomorphic to some $\A_{r,i}$. 
In~\cite{F91}, he showed the exact ranges of $n,k$ and $t=n-d$ for which $p(n,k,d)=2^d$.
See~\cite{FT16} for further background.

After posting an earlier version of our manuscript, we learned from Frankl that he~\cite{Fra76} had already shown our Theorem~\ref{partialproofFranklConj} below, with a different argument and for a slightly different bound on $n_0$ -- we discuss this at the end of the section. We find it curious that this result of Frankl was not mentioned before in the literature with respect to the conjecture of Erd\H{o}s and Frankl. 
\\

In addition to compression, we also need the notion of shifting as defined in e.g.~\cite{F87}.
For any $i,j \in [n]$, $i<j$, the {\em $(i,j)$-shift} $\S_{ij}(\A)$ of a family $\A$ is $\S_{ij}(\A)=\{\S_{ij}(S)\mid S \in \A\}$, where
\[
\S_{ij}(S) =
\begin{cases}
S \setminus \{j\} \cup \{i\} & \text{if $ i \not \in S, j \in S$ and $S \setminus \{j\} \cup \{i\} \not \in \A$} \\
S & \text{otherwise}
\end{cases}.
\]
After a finite number of shifts, we obtain a shifted family, i.e.~a family that is invariant under shifts.
The following lemma is standard, but for completeness, we give a proof.

\begin{lem}\label{ij_shift}
	If $\A \subset 2^{[n]}$ is a compressed $k$-wise $(n-d)$-union family, then so is $\S_{ij}(\A)$.
\end{lem}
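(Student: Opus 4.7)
The lemma requires showing that $\S_{ij}(\A)$ inherits from $\A$ both the compressed property (closure under subsets) and the $k$-wise $(n-d)$-union property. My plan is to first classify each $T \in \S_{ij}(\A)$ according to how it arises from $\A$, and then use compressedness of $\A$ to verify each property in turn.

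Each $T \in \S_{ij}(\A)$ is of one of two types. Either $T \in \A$ was not shifted, in which case whenever $j \in T$ and $i \notin T$, the definition of the shift forces the partner $T \setminus \{j\} \cup \{i\}$ to already lie in $\A$; or else $T = R \setminus \{j\} \cup \{i\}$ for some $R \in \A$ with $i \notin R$, $j \in R$, $R \setminus \{j\} \cup \{i\} \notin \A$, and in this second case compressedness of $\A$ yields $T \setminus \{i\} = R \setminus \{j\} \in \A$. For compressedness of $\S_{ij}(\A)$, I would fix $T \in \S_{ij}(\A)$ and $T' \subseteq T$ and run a short case analysis on the type of $T$ and on $\{i,j\} \cap T'$. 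Compressedness of $\A$ directly supplies, in each subcase, either that $T' \in \A$ together with a certificate that the shift fixes $T'$, or a set $T' \setminus \{i\} \cup \{j\} \in \A$ whose shifted image is $T'$; in either case $T' \in \S_{ij}(\A)$.

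For the $k$-wise union property, I would take $T_1,\dots,T_k \in \S_{ij}(\A)$ and put $U = \bigcup_l T_l$. If every $T_l \in \A$, the bound $|U| \le d$ is immediate. Otherwise, partition the indices into $L_1$ (shifted $T_l$'s), $L_0^A$ (non-shifted $T_l$ with $j \in T_l, i \notin T_l$), and $L_0^B$ (the remaining non-shifted). Assign an $\A$-partner $S_l$ of the same size as $T_l$ by setting $S_l = T_l \setminus \{i\} \cup \{j\}$ for $l \in L_1$, $S_l = T_l \setminus \{j\} \cup \{i\}$ for $l \in L_0^A$ (which lies in $\A$ by non-shiftedness of $T_l$), and $S_l = T_l$ for $l \in L_0^B$. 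Then $\bigcup_l S_l$ agrees with $U$ on $[n] \setminus \{i,j\}$, so it remains to track the membership of $i$ and $j$.

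The heart of the argument is the subtle case $L_0^A = \emptyset$ and $L_1 \ne \emptyset$: after replacing shifted $T_l$'s by their $\A$-preimages, the element $i$ might seem to drop out of the union. The rescue is this key observation: if some $T_m \in L_0^B$ contributes $j$ to $U$, then $T_m \notin L_0^A$ forces $i \in T_m$, so $i$ survives in $\bigcup_l S_l$ via $T_m$; if no non-shifted $T_m$ contributes $j$, then gaining $j$ in $\bigcup_l S_l$ (from some $l \in L_1$) compensates for losing $i$. Either way $|U| \le |\bigcup_l S_l| \le d$. I expect the main technical work to lie in this final case analysis, since it uses in an essential way both the compressedness of $\A$ and the fact that non-shiftedness of $T_l \in L_0^A$ forces the auxiliary set $T_l \setminus \{j\} \cup \{i\}$ into $\A$.
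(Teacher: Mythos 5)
Your proposal is correct and follows essentially the same route as the paper: both arguments classify the sets of $\S_{ij}(\A)$ by whether they were shifted and how they meet $\{i,j\}$, exploit the fact that a non-shifted set containing $j$ but not $i$ must have its $(i\leftrightarrow j)$-swap already in $\A$, and then check via a case analysis on $\{i,j\}$ that passing to the $\A$-partners cannot shrink the union. The only caution is that your "subtle case" phrasing slightly garbles which subcase loses $i$ versus gains $j$; when fully written out (including the reduction to $L_1\neq\emptyset$ via "all $T_l\in\A$ is immediate"), every case closes.
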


\begin{proof}
	Let $\A \subset 2^{[n]}$ be a compressed $k$-wise $(n-d)$-union family.
	One can check that, if $T=\S_{ij}(S)$ for some $S \in \A$, then $\S_{ij}(2^{S})=2^{T}$. Thus $\S_{ij}(\A)$ is compressed.

	Next, assume for a contradiction that there are $k$ sets $T_1, \ldots, T_k$ in $\S_{ij}(\A)$ whose union $T$ has size $d+1.$
	Here $T_{\ell}= \S_{ij}(S_{\ell})$ where $S_{\ell}\in \A$ for $\ell\in[k]$.
	If $\lvert T \cap \{i,j\} \rvert \le 1$, then it is clear that $\lvert S_1 \cup \cdots \cup S_k \rvert \ge d+1,$ a contradiction with $\A$ being $k$-wise $(n-d)$-union.
	Otherwise, either there is some $T_{\ell}$ with $\{i,j\} \subset T_{\ell}$ or there are sets $T_{\ell}$, $T_q$ with $T_{\ell} \cap \{i,j\}=\{i\}$ and $T_q \cap \{i,j\}=\{j\}$.
	In the former case, $S_{\ell}=T_{\ell} \supset \{i,j\}$. 
	In the latter, by definition both $T_q$ and $T_q \setminus \{j\} \cup \{i\}$ are in $\A$ and so one of the two has union with $S_{\ell}$ equal to $T_q \cup T_{\ell} \supset \{i,j\}$. In either case, we again conclude that $\lvert S_1 \cup \cdots \cup S_k \rvert \ge d+1$, a contradiction.
\end{proof}

\begin{lem}\label{MainLem}
	Let $B \subset [n]$ be a set with $\lvert B \rvert \ge s$ and $\A \subset 2^{[n]}$ a family with $\lvert \A \rvert >2^s \binom{n}{\le u}$. There exists some $A \in \A$ such that $\lvert A \cup B \rvert \ge s+u+1.$
\end{lem}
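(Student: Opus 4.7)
The plan is to argue by contrapositive: assume that every $A \in \A$ satisfies $\lvert A \cup B \rvert \le s + u$ and show that this forces $\lvert \A \rvert \le 2^s \binom{n}{\le u}$, contradicting the hypothesis.

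The first key step is to replace $B$ by a subset $B' \subseteq B$ of size exactly $s$ (possible because $\lvert B \rvert \ge s$). Since $A \cup B' \subseteq A \cup B$, we still have $\lvert A \cup B' \rvert \le s + u$ for every $A \in \A$. Now I would encode each $A \in \A$ by the pair $(A \cap B', A \setminus B')$, which determines $A$ uniquely, and bound the number of possibilities for each coordinate. For the first coordinate, $A \cap B'$ is an arbitrary subset of $B'$, giving at most $2^s$ options. For the second, the identity $\lvert A \setminus B' \rvert = \lvert A \cup B' \rvert - \lvert B' \rvert \le (s+u) - s = u$ shows that $A \setminus B'$ is a subset of $[n]\setminus B'$ of size at most $u$, contributing at most $\binom{n-s}{\le u} \le \binom{n}{\le u}$ options. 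Multiplying yields $\lvert \A \rvert \le 2^s \binom{n}{\le u}$, the required contradiction.

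The main (and essentially only) subtle point is noticing that one must shrink $B$ down to a set of exactly $s$ elements before applying the encoding: if one instead worked with $B$ directly, the trivial estimate $\lvert A \cap B \rvert \le 2^{\lvert B \rvert}$ would be far too weak whenever $\lvert B \rvert > s$, while the bound on $\lvert A \setminus B \rvert$ would only improve, and the overall count would no longer match $2^s \binom{n}{\le u}$. Once this calibration is made, the rest is a routine decomposition-and-count argument; no compression, shifting, or appeal to the earlier lemmas is needed.
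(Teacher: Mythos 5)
Your argument is correct and is essentially the paper's proof read in the contrapositive: the authors also pass to a subset $B'\subseteq B$ of size exactly $s$ and consider the projection $A\mapsto A\setminus B'$, whose fibres have size at most $2^s$, which is exactly your $(A\cap B', A\setminus B')$ encoding. No substantive difference.
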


\begin{proof}
	Take $B' \subset B$ such that $\lvert B' \rvert = s$.
	Let $\A'=\{A \setminus B' \mid A \in \A\}.$
	Then $\lvert \A' \rvert > \binom{n}{\le u}$ and so by definition there is some $A \in \A$ such that $\lvert A \setminus B' \rvert >u$, and thus $\lvert A \cup B \rvert \ge \lvert A \cup B' \rvert >s+u$.
\end{proof}

We are now prepared to prove Conjecture~\ref{F} for $n$ large enough compared with $d$ and $k$.
We first prove it for $d \equiv 0 \pmod k$, which is the base case in the general proof.

\begin{prop}\label{gen01}
	Let $k,d$ be positive integers with $d \equiv 0 \pmod k$.
	There exists $n_0=n_0(d,k)$ such that $p(n,d,k)=\binom{n}{ \le d/k}$ for every $n \ge n_0$.
	Moreover, the only $k$-wise $(n-d)$-union family of size $\binom{n}{ \le d/k}$ equals $\binom{[n]}{ \le d/k}$.
\end{prop}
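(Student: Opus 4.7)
My plan is built around an iterated application of Lemma~\ref{MainLem}. Setting $t := d/k$, the lower bound $p(n,k,d) \ge \binom{n}{\le t}$ is witnessed by the family $\binom{[n]}{\le t}$: any $k$-wise union of subsets of size at most $t$ has size at most $kt = d$. What remains is to show the matching upper bound together with uniqueness of the extremal family for $n \ge n_0(d,k)$ sufficiently large. I note that Lemma~\ref{MainLem} makes no compressed or shifted assumption on its input family, so (although Theorem~\ref{sharp} would permit a reduction to compressed $\A$) the argument below applies directly to an arbitrary $k$-wise $(n-d)$-union family.

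The key step is to derive, from the assumption $|\A| \ge \binom{n}{\le t}$ and $\A \ne \binom{[n]}{\le t}$, a configuration $A_1,\dots,A_k \in \A$ with $|A_1 \cup \cdots \cup A_k| > d$, contradicting the $k$-wise $(n-d)$-union property. First I would secure some $A_1 \in \A$ with $|A_1| \ge t+1$: if $|\A| > \binom{n}{\le t}$, then Lemma~\ref{MainLem} applied with $B = \emptyset$, $s = 0$, $u = t$ produces such $A_1$; if instead $|\A| = \binom{n}{\le t}$ and $\A \ne \binom{[n]}{\le t}$, then $\A$ must already contain some set of size $\ge t+1$ (else $\A \subset \binom{[n]}{\le t}$ with equal cardinality forces equality), which I take as $A_1$. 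Then for $j = 2,3,\dots,k$, with $B_{j-1} := A_1 \cup \cdots \cup A_{j-1}$ satisfying $|B_{j-1}| \ge (j-1)t+1$ by induction, I would invoke Lemma~\ref{MainLem} with $s = (j-1)t+1$ and $u = t-1$ to obtain $A_j \in \A$ with $|A_j \cup B_{j-1}| \ge s+u+1 = jt+1$. After $k$ steps, $|A_1 \cup \cdots \cup A_k| \ge kt+1 > d$, the desired contradiction.

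The main obstacle will be choosing the iteration parameters so that the hypothesis of Lemma~\ref{MainLem} is met at every step. The naive choice $u = t$ throughout would demand $|\A| > 2^{(k-1)(t+1)} \binom{n}{\le t}$ by the final step, vastly beyond our assumption. Stepping $u$ down to $t-1$ from the second application onward instead yields the far milder worst-case requirement $|\A| > 2^{(k-1)t+1} \binom{n}{\le t-1}$, and since $\binom{n}{\le t}/\binom{n}{\le t-1}$ grows linearly in $n$, this is satisfied for every $n \ge n_0(d,k)$ sufficiently large. (The degenerate case $t = 0$, i.e.\ $d = 0$, is trivial, since the only $k$-wise $n$-union family is $\{\emptyset\}$.) This delicate parameter choice is precisely what balances the factors of $2^s$ against the binomial growth and closes the iteration under only the modest assumption $|\A| \ge \binom{n}{\le t}$, yielding simultaneously the bound $p(n,k,d) = \binom{n}{\le d/k}$ and the uniqueness statement.
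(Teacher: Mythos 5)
Your proposal is correct and follows essentially the same route as the paper: iterate Lemma~\ref{MainLem} with $s=it+1$ and $u=t-1$ to build $A_1,\dots,A_k$ with $\lvert A_1\cup\cdots\cup A_k\rvert\ge kt+1=d+1$, with $n_0$ chosen so that $\binom{n}{\le t}>2^{(k-1)t+1}\binom{n}{\le t-1}$. Your explicit treatment of the first step (extracting a set of size $\ge t+1$, including the equality case $\lvert\A\rvert=\binom{n}{\le t}$ with $\A\ne\binom{[n]}{\le t}$) just spells out what the paper calls the trivial base case and its uniqueness remark.
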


\begin{proof}
	Let $t=d/k$ and $\A$ be a family with $\lvert \A \rvert >\binom{n}{ \le t}$.
	Choose $n_0$ such that $\binom{n}{\le t}-2^{(k-1)t+1}\binom{n }{\le t-1} > 0$ for every $n \ge n_0$. Such a choice exists because we have a polynomial in $n$ whose leading coefficient is strictly positive.
	We prove by induction on $i\in[k]$ that there exist sets $A_1, \ldots, A_i \in \A$ such that $\lvert A_1 \cup \cdots \cup A_i \rvert \ge it+1$.
	If $i=1$, then the statement is trivial.
	Assume it holds for some $i \in [k-1]$.
	Then by the choice of $n_0$ we can apply Lemma~\ref{MainLem} to $\A$ with $B=A_1 \cup \cdots \cup A_i$, $s=it+1$ and $u=t-1$ for the inductive step.
	This proves that $\A$ is not $k$-wise $(n-d)$-union.

Note that this induction argument also proves that if $\A$ satisfies $\lvert \A \rvert =\binom{n}{ \le t}$ then $ \A = \binom{[n]}{ \le t}$, thus proving uniqueness of the extremal example.
\end{proof}

\begin{thr}[Frankl~\cite{Fra76}]\label{partialproofFranklConj}
	Let $k,d$ be positive integers with $d \equiv r \pmod k$ for some $0 \le r \le k-1$.
	There exists $n_0=n_0(d,k)$ such that $p(n,k,d)=2^r\binom{n-r}{ \le \lfloor d/k \rfloor }$ for every $n \ge n_0$.
	Moreover, the only $k$-wise $(n-d)$-union family of size $\binom{n}{ \le d/k}$ equals $\A_{ r, \lfloor d/k \rfloor }$ up to relabelling.
\end{thr}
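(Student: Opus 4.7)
The plan is to proceed by induction on $r$. The base case $r=0$ is Proposition~\ref{gen01}. For the inductive step $r\ge 1$, Lemmas~\ref{VCcompressed} and~\ref{ij_shift} combined with Theorem~\ref{sharp} allow us to restrict attention to $\A\subset 2^{[n]}$ that is compressed and $(i,j)$-shifted for all $i<j$. We then extend the iterative scheme of Proposition~\ref{gen01} so as to force $k$ sets with union of size $\ge d+1=kt+r+1$ whenever $|\A|>2^r\binom{n-r}{\le t}$, contradicting the $k$-wise $(n-d)$-union hypothesis.

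We partition $\A$ by its trace on $[r]$: for each $R\subseteq[r]$, set $\A_R=\{S\setminus[r]:S\in\A,\ S\cap[r]=R\}\subset 2^{\{r+1,\dots,n\}}$. Compressed-ness gives the nesting $\A_R\subseteq\A_{R'}$ whenever $R'\subseteq R$, and shifted-ness further yields $\A_R\subseteq\A_{[|R|]}$. Crucially, $\A_{[r]}$ is $k$-wise $((n-r)-kt)$-union on $\{r+1,\dots,n\}$, since $k$ sets of the form $[r]\cup T_i\in\A$ have union of size at most $d$, giving $|T_1\cup\cdots\cup T_k|\le d-r=kt$. Applying Proposition~\ref{gen01} on $\{r+1,\dots,n\}$ with $d'=kt$ therefore yields $|\A_{[r]}|\le\binom{n-r}{\le t}$ once $n-r\ge n_0(kt,k)$.

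To upper bound the full sum $|\A|=\sum_{R\subseteq[r]}|\A_R|$, we mimic Proposition~\ref{gen01}: we build $A_1,\dots,A_k\in\A$ by iterated applications of Lemma~\ref{MainLem}, starting from $A_1$ with $|A_1|\ge t+1$ (guaranteed because $|\A|>2^r\binom{n-r}{\le t}>\binom{n}{\le t}$ for $n$ large), and extending the running union by at least $t$ at each subsequent step. The new ingredient for $r\ge1$ is to secure the additional $r$ elements by exploiting shifted-ness: at a suitably chosen step, we replace some $A_i$ by a shifted variant $A_i'\in\A$ obtained by pushing $r$ of its elements down into $[r]$; provided the displaced elements of $A_i$ are already covered by $\bigcup_{j\ne i}A_j$, the union acquires all of $[r]$ at the cost of removing only already-redundant elements, yielding a net gain of~$r$. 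The \emph{main obstacle} is the calibration needed to guarantee this redundancy condition simultaneously with the preconditions of Lemma~\ref{MainLem}; the threshold $n_0(d,k)$ is taken large enough to validate all the binomial-coefficient inequalities invoked along the way.

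Uniqueness follows by tracking equality throughout: $|\A|=2^r\binom{n-r}{\le t}$ forces equality in the Proposition~\ref{gen01} bound for $\A_{[r]}$ (so $\A_{[r]}=\binom{\{r+1,\dots,n\}}{\le t}$ by its uniqueness clause), and the nestings $\A_R\subseteq\A_{[|R|]}$ must then be equalities for every $R$, pinning down $\A=\A_{r,\lfloor d/k\rfloor}$ up to relabelling of $[n]$.
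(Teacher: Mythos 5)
There is a genuine gap at the heart of your argument: you never actually establish that $\lvert \A \rvert > 2^r\binom{n-r}{\le t}$ forces $k$ sets with union of size $\ge d+1$. Iterating Lemma~\ref{MainLem} as in Proposition~\ref{gen01} (gaining $t$ per step after an initial $t+1$) only produces a union of size $kt+1 = d-r+1$, which is exactly $r$ short of a contradiction; closing that gap of $r$ \emph{is} the whole difficulty of the $r\ge 1$ case. Your proposed fix --- swapping some $A_i$ for a shifted variant that pushes $r$ elements down into $[r]$, assuming the displaced elements are ``already redundant'' --- is not an argument: you give no reason such an $A_i$ with $r$ redundant elements exists, no reason the required shifted set lies in $\A$ (the $(i,j)$-shift only guarantees membership of $S\setminus\{j\}\cup\{i\}$ when that set was \emph{not} already in $\A$, and only one element at a time), and no reason the union does not already meet $[r]$, in which case nothing is gained. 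You explicitly flag this calibration as the ``main obstacle,'' i.e.\ you have identified rather than solved the problem. The trace decomposition also cannot rescue the count on its own: compression gives $\A_{[r]}\subseteq \A_R$ for every $R\subseteq[r]$, so the one piece you successfully bound, $\A_{[r]}$, is the \emph{smallest} piece; for $\lvert R\rvert<r$ the family $\A_R$ is only $k$-wise $((n-r)-(d-\lvert R\rvert))$-union, and summing the resulting bounds over all $R$ gives on the order of $3^r\binom{n}{\le t}$, not $2^r\binom{n-r}{\le t}$. The same issue undermines your uniqueness claim, which would need upper bounds on every $\A_R$, not just on $\A_{[r]}$.

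For comparison, the paper's proof inducts on $r$ in a different way: it splits $\A$ into $\A_1$ (sets containing the element $1$) and $\A_{\overline 1}$, notes that $\A_1$ restricted to $[n]\setminus\{1\}$ is $k$-wise $((n-1)-(d-1))$-union so $\lvert\A_1\rvert\le p(n-1,k,d-1)=2^{r-1}\binom{n-r}{\le t}$ by the inductive hypothesis, and concludes immediately if $\lvert\A_{\overline 1}\rvert$ obeys the same bound. In the remaining case it extracts from $\A_{\overline 1}$ $k$ sets whose union has size exactly $d$, uses compression and shifting to take them disjoint with union $[d+1]\setminus\{1\}$, deduces that no member of $\A$ contains $1$ together with $t$ elements of $[n]\setminus[d+1]$, and then a shiftedness count bounds $\lvert\A\rvert$ by $2^{d+1}\binom{n-d-1}{\le t-1}+\binom{n-d-1}{t}$, which is too small for large $n$ --- contradicting maximality. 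Some mechanism of this kind, converting the existence of a near-extremal union configuration into a global size bound, is what your sketch is missing.
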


\begin{proof}
	The proof is by induction on $r$, $0\le r\le k-1$.
	The base case $r=0$ is Proposition~\ref{gen01}.
	So assume $r \ge 1$.
	Fix any $k>r$ and $d \equiv r \pmod k$. Write $t=\lfloor d/k \rfloor$ which equals $(d-r)/k$.
	Since $d-1 \equiv r-1 \pmod k$, by induction there exists $n_0(d-1,k)$ such that $p(n-1,k,d-1)= 2^{r-1}\binom{n-r}{ \le t }$ for every $n\ge n_0(d-1,k)+1$.
	Choose $n_0 \ge n_0(d-1,k)+1$ large enough such that 
	$2^r \binom{n-r}{ \le t}-\left(2^{d+1} \binom{n}{ \le t-1} + \binom{n}{t}\right) > 0$ holds for every $n \ge n_0$. Such a choice exists because we have a polynomial in $n$ whose leading coefficient is strictly positive, as $r\ge 1$.
	
	For $n \ge n_0$, take a maximum family $\A \subset 2^{[n]}$ which is $k$-wise $(n-d)$-union. So $\lvert \A \rvert =  p(n,k,d)$.
	We may assume $\A$ is compressed and shifted by Lemma~\ref{ij_shift} since any maximal $k$-wise $(n-d)$-union family is necessarily invariant under taking subsets. .
	
	Let $\A_1=\{S \in \A \mid 1 \in S\}$ and $\A_{\overline1}=\A \setminus \A_1.$
	If $\lvert \A_{\overline1} \rvert \le p(n-1,k,d-1)$, then the result follows since, by induction, we have $\lvert \A_{1} \rvert \le p(n-1,k,d-1)$ and $\lvert \A \rvert= \lvert \A_{\overline1} \rvert+\lvert \A_1 \rvert \leq 2\cdot p(n-1,k,d-1) = 2^r\binom{n-r}{ \le t }$. Otherwise, by definition $\A_{\overline1}$ contains $k$ sets $S_1, \ldots, S_k$ whose union has size $d$.
	First order the sets in nonincreasing size: $\lvert S_1 \rvert \ge \cdots \ge \lvert S_k \rvert$.
	As $\A$ is compressed and shifted, we may assume that $S_1, \ldots, S_k$ are disjoint and their union is $[d+1]\setminus\{1\}$.
	Then $\lvert S_k \rvert \le d/k$, and so $\lvert S_k \rvert \le t$.
	There cannot be a set $S'_k$ in $\A$ which contains $1$ and $t$ elements of $[n]\setminus[d+1]$, or else $\lvert S_1 \cup \cdots \cup S_{k-1} \cup S'_k \rvert \ge d+1$, contradicting that $\A$ is $k$-wise $(n-d)$-union.
	As $\A$ is shifted, it contains at most 
	$2^{d+1} \binom{n-(d+1)}{\le t-1}+\binom{n-(d+1)}{t} < 2^r \binom{n-r}{\le t}$ sets.
	This completes the inductive step.
	
	Note that equality occurs if and only if $\lvert \A_1 \rvert =\lvert \A_{\overline1} \rvert = p(n-1,k,d-1)$ and so uniqueness up to relabelling of the maximal $k$-wise $(n-d)$-union families also follows by induction.
\end{proof}

Theorems~\ref{sharp} and~\ref{partialproofFranklConj} together imply Theorem~\ref{main}.
From our proof we deduce that $n_0(d,k)$ in Theorems~\ref{main} and~\ref{partialproofFranklConj} can be taken to be of order $d2^d/k$. Note that  
it cannot be of order smaller than $d2^k/k$, by the examples stated just after Conjecture~\ref{F}.
We remark that Frankl~\cite{Fra76} originally employed a different type of induction for a more general result proving an upper bound on $n_0(d,k)$ of order $d^32^d/k^2$.

	\section{A counterexample to a question of Dvir and Moran}\label{Ques1}

Dvir and Moran~\cite[Qu.~1]{DvMo18+} asked if it could be true that a set system $\A \subset 2^{[n]}$ satisfies $\lvert \A \rvert \le n^{d/2+O(1)}$ whenever $\VC(\A \cup \A)\le d$ and $\VC(\A \cap \A) \le d$. We show that this is not the case.
	
\begin{prop}\label{better_example}
For each $d \le n$, there exists $A \subset 2^{[n]}$ satisfying $\VC(\A \cap \A) \le d,$ $\VC( \A \cup \A) \le d$ and $\lvert \A \rvert > (n/d)^d$. 
\end{prop}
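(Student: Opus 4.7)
The plan is to take $\A$ to be a product of chains. Fix $m := \lfloor n/d \rfloor$ and partition $[n]$ into $d$ blocks $B_1,\dots,B_d$, each of size at least $m$ (possible since $md \le n$). For each $i \in [d]$, pick any strict chain $\emptyset = C_{i,0} \subsetneq C_{i,1} \subsetneq \cdots \subsetneq C_{i,m}$ of subsets of $B_i$, and set $\F_i = \{C_{i,0},\dots,C_{i,m}\}$. Define
\[
\A = \Bigl\{\, C_{1,j_1} \cup \cdots \cup C_{d,j_d} \,\Bigm|\, 0 \le j_i \le m \text{ for each } i \in [d]\,\Bigr\}.
\]
Since the blocks are disjoint, the tuple $(j_1,\dots,j_d)$ can be read off from the union, so $\lvert \A \rvert = (m+1)^d$; and $m+1 > n/d$ regardless of divisibility, giving $\lvert \A \rvert > (n/d)^d$.

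Next I would verify that $\A \cap \A = \A = \A \cup \A$. Each $\F_i$, being a chain, satisfies $C_{i,j} \cap C_{i,j'} = C_{i,\min(j,j')}$ and $C_{i,j} \cup C_{i,j'} = C_{i,\max(j,j')}$. Since the blocks $B_i$ are pairwise disjoint, intersections distribute across the blockwise unions, giving
\[
\Bigl(\bigcup_{i=1}^{d} C_{i,j_i}\Bigr) \cap \Bigl(\bigcup_{i=1}^{d} C_{i,j'_i}\Bigr) = \bigcup_{i=1}^{d} C_{i,\min(j_i,j'_i)} \in \A,
\]
and the analogous identity for union. Hence $\A \cap \A = \A = \A \cup \A$, reducing the task to showing $\VC(\A) \le d$.

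For that, I would invoke the standard additivity of VC dimension under disjoint products: when the ground sets $B_i$ of the $\F_i$ are pairwise disjoint, $\VC(\F_1 \otimes \cdots \otimes \F_d) = \sum_i \VC(\F_i)$. Each chain $\F_i$ has VC dimension exactly $1$: a pair $\{y_1,y_2\} \subset B_i$ cannot be shattered, because the chain linearly orders the indices at which the two elements first appear, and so the later element can never belong to a member of $\F_i$ without the earlier one already belonging; and elements outside $B_i$ lie in no member of $\F_i$ at all. Summing yields $\VC(\A) = d$. The only routine verifications are the product formula for VC dimension (immediate by restricting a shattered set to each block while fixing the other coordinates) and the inequality $\lfloor n/d \rfloor + 1 > n/d$; neither presents a real obstacle.
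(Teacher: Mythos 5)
Your construction is correct and is essentially the same as the paper's: the paper's family of sets satisfying ``monotonicity modulo $d$'' is precisely a product of $d$ chains, one on each residue class modulo $d$ (the chains being initial segments of the arithmetic progressions), and its bound $\VC(\A)\le d$ is proved by the same pigeonhole-two-elements-into-one-block argument that underlies your additivity claim. Your counting $(\lfloor n/d\rfloor+1)^d>(n/d)^d$ and your verification that a disjoint product of chains is closed under $\cap$ and $\cup$ are both sound, so there is no gap.
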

	
\begin{proof}
Let $\A \subset 2^{[n]}$ be the family of subsets of $[n]$ that satisfies the property ``monotonicity modulo $d$'', i.e.~let $S\subset [n]$ belong to $\A$ if  $i-d\in S$ for any $i\in S$ with $i>d$.
We note that every set $S \in \A$ can be uniquely represented by $d$ integers $i_1,\dots, i_d$ with $0 \le i_k \le \lfloor ({n-k})/{d}+1 \rfloor$: write $S= \cup_{k=1}^d S_k$, where $S_k=\{k,k+d, \ldots, k+(i_k-1) d\}$ for $k \in [d]$.
We now verify that this family $A$ satisfies the required properties.
Note that $\A \cup \A = \A \cap \A = \A$, since the property ``monotonicity modulo $d$'' is preserved by intersection or union.
\begin{itemize}
\item We have $\VC(\A)=d$. 
First note that $\VC(A) \ge d$ since $2^{[d]} \subset \A$.
Next we show the reverse inequality.
Let $Y \subset [n]$ be a subset of size at least $d+1$.
By the pigeonhole principle, $Y$ contains two elements $y_1, y_2$ such that $y_1 \equiv y_2 \pmod d,$ where without loss of generality we may assume $y_2>y_1$.
Due to the property ``monotonicity modulo $d$'', every set $S \in A$ containing $y_2$ contains $y_1$ as well.
Thus there is no $S \in \A$ such that $\{y_2\}=S \cap Y$, and so $Y\notin \sh(A)$.
\item The family $\A$ has size $\lvert \A \rvert = \prod_{k=1}^d \lfloor (n-k)/d +2 \rfloor > ( n/d )^d$. \qedhere	
	\end{itemize}
	\end{proof}
	
	Obviously, since $\A \subset \A \cup \A$, we know by  Theorem~\ref{SHPlemma} that $\lvert \A \rvert \le \binom{n}{ \le d} \le (1+d)(en/d)^d$, so the construction is best possible up to a factor depending on $d$. 

When $d=1$, up to relabelling Proposition~\ref{better_example} gives the unique extremal families: complete chains, i.e.~families of $n+1$ subsets of $[n]$, ordered by inclusion.
The upper bound $n+1$ is a consequence of Theorem~\ref{SHPlemma} and $\A \subset \A \cap \A$. One can check uniqueness by noting that another candidate would contain two equal-sized subsets $S_1, S_2 \subset [n]$ and then performing a small case distinction.

On the other hand, we observe that Proposition~\ref{better_example} is not tight in general.
	For example take $n=d+1$ with $d\ge 3$, then the maximum size of a family $\A \subset 2^{[n]}$ satisfying $\VC(\A \cap \A) \le d$ and $\VC( \A \cup \A) \le d$ equals $2^n-2>3 \cdot 2^{d-1}=\prod_{k=1}^d \lfloor (n-k)/d +2 \rfloor$.
	Indeed, it is easy to see that for every family $\A \subset 2^{[n]}$ of size $2^n-1$, either $\A \cap \A$ or $\A \cup \A$ equals $2^{[n]}$. Furthermore, the family $\A=2^{[n]}\setminus \{ [1], [n]\}$ has size $2^{n}-2$ and $\A \cap \A=2^{[n]}\setminus \{  [n]\} $ and $\A \cup \A=2^{[n]}\setminus \{ [1] \}$. Clearly, both $\A \cap \A$ and  $\A \cup \A$ have VC dimension $d=n-1.$

\subsection*{Acknowledgement}

We thank Peter Frankl for informing us of his early work in~\cite{Fra76,F83}.

\bibliographystyle{abbrv}
\bibliography{VCunion}

\end{document}